\documentclass[12pt]{amsart}
\usepackage{amssymb, amsmath, amsfonts, amsthm, esint,graphicx}
\usepackage[abbrev]{amsrefs}
\usepackage[margin=1 in]{geometry}
\usepackage[colorlinks=true, linkcolor=blue]{hyperref}

\def \dd {\partial}

\DeclareMathOperator{\Ric}{Ric}

\DeclareMathOperator{\Hess}{Hess}

\DeclareMathOperator{\im}{im}

\title{The first nonzero eigenvalue of the $p$-Laplacian on differential forms}
\author{Shoo Seto}
\date{\today}
\address{Department of Mathematics\\
         University of California\\
         Irvine, CA 92697}
\email{\href{mailto:shoos@uci.edu}{shoos@uci.edu}}
\keywords{$p$-Laplacian, Hodge-Laplacian, Weitzenb\"ock curvature}

\theoremstyle{definition} 
\newtheorem{definition}{Definition}[section]

\newtheorem{lemma}{Lemma}[section]
\newtheorem{remark}{Remark}[section]

\newtheorem{proposition}{Proposition}[section]
\newtheorem{theorem}{Theorem}[section]

\begin{document}
\maketitle
\begin{abstract}
We introduce a generalization of the $p$-Laplace operator to act on differential forms and generalize an estimate of Gallot-Meyer \cite{gallot-meyer} for the first nonzero eigenvalue on closed Riemannian manifolds.
\end{abstract}
\section{Introduction}
Let $(M,g)$ be an $n$-dimensional closed Riemannian manifold.   Motivated from the variational characterization of the Laplacian eigenvalue problem, we define the $L^p$-Dirichlet integral on $k$-forms (introduced in \cite{scott}) by
\begin{equation}\label{Lpdirichlet}
\mathcal{F}[\alpha] := \int_M \|d\alpha\|^p + \|d^*\alpha\|^p, \quad \alpha \in \Omega^k(M),
\end{equation}
where $d^*$ is the $L^2$-adjoint of the exterior derivative $d$.  Note that $\mathcal{F}[\alpha] = 0$ if and only if $\alpha \in \mathcal{H}^k(M)$, that is, the minimum is zero and is attained for harmonic $k$-forms, i.e. $\alpha \in \ker(d)\cap \ker(d^*)$.   For a nonzero infimum we consider the space
\begin{equation}\label{orthoharmo}
A_k := \left\{ \alpha \in \mathcal{W}^{1,p}(\Omega^k(M)) \ | \ \fint_M \|\alpha \|^p= 1, \int_M\|\alpha\|^{p-2}\langle \alpha,\omega\rangle = 0, \omega \in \mathcal{H}^k(M) \right\},
\end{equation}
where the space $\mathcal{W}^{1,p}(\Omega^k(M))$ is the $(1,p)$-Sobolev space of differential $k$-forms defined in \cite{scott}.  See \S \ref{variation} for the precise definition.
Computing the Euler-Lagrange equation leads us to the defining the following operator
\begin{definition}[$p$-Hodge Laplacian]
\begin{equation}\label{p-laplace}
\Delta_{p}\alpha := d^*(\|d\alpha\|^{p-2}d\alpha) + d(\|d^*\alpha\|^{p-2}d^*\alpha), \quad \alpha \in \Omega^k(M).
\end{equation}
\end{definition}
When $p=2$, this becomes the usual Hodge Laplacian.  For $p\neq 2$ and $\alpha \in C^\infty(M)$ $\Delta_p$ becomes the usual $p$-Laplacian.  The corresponding eigenvalue equation is given by
\begin{equation}\label{eigenform}
\Delta_{p}\alpha = \lambda \|\alpha\|^{p-2}\alpha, \quad \alpha \in \Omega^k(M)
\end{equation}
and the variational principle tells us that
\begin{equation*}
\lambda_1 = \inf\{\mathcal{F}[\alpha] \ | \ \alpha \in A_k\}.
\end{equation*}
See \S \ref{variation} for details. When $p=2$, there is much work on the spectrum of the Hodge-Laplacian acting on differential forms.  Among many others, we point out the work of Gallot-Meyer \cite{gallot-meyer}, \cite{gallot-meyer-2} who show an estimate of the first eigenvalue using bounds from the Weitzenb\"ock curvature on compact Riemannian manifolds.   For manifolds with boundary, among many others, see works of Kwong \cite{kwong}, Savo \cite{savo}, Raulot-Savo \cite{raulot-savo}, and references therein.

For $p\neq 2$, the $p$-Laplace eigenvalue problem on $0$-forms (functions) has attracted much attention.  See notes by Lindqvist \cite{lindqvist} for a general reference on the $p$-Laplace equation.  For estimates on the first eigenvalue relating to the curvature, among many other works, see Matei \cite{matei}, Naber-Valtorta \cite{naber-valtorta}, Seto-Wei \cite{seto-wei} for eigenvalue estimates with $\Ric \geq K$, $K\in \mathbb{R}$.

In this paper we prove the following lower bound estimate for the first eigenvalue 
\begin{theorem}\label{mainthm}
Let $M^n$ be a closed Riemannian manifold with the eigenvalues of the curvature operator bounded below by $H\in \mathbb{R}$ and $p\geq 2$.  Then
\begin{equation*} 
\lambda_1 \geq \left(\frac{k(n-k)}{2^{\frac{2}{p}-1}\left(C+\frac{(p-2)}{2}\right)}H \right)^{\frac{p}{2}},
\end{equation*}
where 
\begin{equation*}
C=\max\left\{\frac{k}{k+1}, \frac{n-k}{n-k+1} \right\}.
\end{equation*}
\end{theorem}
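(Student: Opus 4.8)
The plan is to run the Bochner--Weitzenb\"ock argument of Gallot--Meyer after a ``ground state substitution'' of the kind used for the scalar $p$-Laplacian. Let $\alpha\in A_{k}$ be a minimizer of $\mathcal F$, so that $\Delta_{p}\alpha=\lambda_{1}\|\alpha\|^{p-2}\alpha$; pairing with $\alpha$ and using $\fint_{M}\|\alpha\|^{p}=1$ gives
\begin{equation*}
\int_{M}\|d\alpha\|^{p}+\|d^{*}\alpha\|^{p}=\lambda_{1}\int_{M}\|\alpha\|^{p}=\lambda_{1}\vol(M).
\end{equation*}
Because $\Delta_{p}$ does not commute with $d$ and $d^{*}$ when $p\neq2$, there is no Hodge reduction available; instead I would introduce $\psi:=\|\alpha\|^{\frac{p-2}{2}}\alpha$, for which $\|\psi\|^{2}=\|\alpha\|^{p}$ and hence $\int_{M}\|\psi\|^{2}=\vol(M)$. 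Thus $\psi$ converts the $L^{p}$-normalization of $\alpha$ into an $L^{2}$-normalization, which is exactly what the Weitzenb\"ock term will need.

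First I would integrate the Bochner identity $\tfrac12\Delta\|\psi\|^{2}=\|\nabla\psi\|^{2}+\langle\mathcal W_{k}\psi,\psi\rangle-\langle\Delta_{H}\psi,\psi\rangle$ over $M$ and feed in the two classical inputs: the hypothesis that the curvature operator is bounded below by $H$ gives $\langle\mathcal W_{k}\psi,\psi\rangle\ge k(n-k)H\|\psi\|^{2}$, hence $\int_{M}\langle\mathcal W_{k}\psi,\psi\rangle\ge k(n-k)H\vol(M)$; and the pointwise inequality $\|\nabla\psi\|^{2}\ge\tfrac1{k+1}\|d\psi\|^{2}+\tfrac1{n-k+1}\|d^{*}\psi\|^{2}$ controls $\|\nabla\psi\|^{2}$ from below. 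Since $\int_{M}\langle\Delta_{H}\psi,\psi\rangle=\int_{M}\|d\psi\|^{2}+\|d^{*}\psi\|^{2}$, the integrated identity collapses to $\tfrac{k}{k+1}\int_{M}\|d\psi\|^{2}+\tfrac{n-k}{n-k+1}\int_{M}\|d^{*}\psi\|^{2}\ge k(n-k)H\vol(M)$, and bounding both coefficients by $C$ gives
\begin{equation*}
C\int_{M}\bigl(\|d\psi\|^{2}+\|d^{*}\psi\|^{2}\bigr)\ \ge\ k(n-k)H\vol(M);
\end{equation*}
the appearance of $C=\max\{\tfrac{k}{k+1},\tfrac{n-k}{n-k+1}\}$ is precisely the cost of not being able to isolate a single half of the Hodge decomposition (for $p=2$ one works with a coclosed or an exact eigenform and recovers the sharper $\min\{(k+1)(n-k),k(n-k+1)\}H$).

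It then remains to bound $\int_{M}\|d\psi\|^{2}+\|d^{*}\psi\|^{2}$ above in terms of $\lambda_{1}$. Expanding $d\psi=\|\alpha\|^{\frac{p-2}{2}}d\alpha+\tfrac{p-2}{2}\|\alpha\|^{\frac{p-4}{2}}\,d\|\alpha\|\wedge\alpha$ and $d^{*}\psi=\|\alpha\|^{\frac{p-2}{2}}d^{*}\alpha-\tfrac{p-2}{2}\|\alpha\|^{\frac{p-4}{2}}\,\iota_{\nabla\|\alpha\|}\alpha$, then squaring and integrating, the coefficient $\tfrac{p-2}{2}$ attaches to every term carrying $d\|\alpha\|$; after absorbing these $d\|\alpha\|$-terms --- for which there is no pointwise bound in terms of $\|d\alpha\|+\|d^{*}\alpha\|$, so that they must be traded against $\int_{M}\|\alpha\|^{p-2}|\nabla\|\alpha\||^{2}=\tfrac{4}{p^{2}}\int_{M}|\nabla\|\alpha\|^{p/2}|^{2}$ and controlled through a second integration by parts of the Bochner identity (equivalently, the weighted-Bochner route with weight $\|\alpha\|^{p-2}$) --- one is led to an estimate of the form $C\int_{M}(\|d\psi\|^{2}+\|d^{*}\psi\|^{2})\le\bigl(C+\tfrac{p-2}{2}\bigr)\int_{M}\|\alpha\|^{p-2}\bigl(\|d\alpha\|^{2}+\|d^{*}\alpha\|^{2}\bigr)$, which is where the $\tfrac{p-2}{2}$ enters. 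Finally H\"older's inequality, $\int_{M}\|\alpha\|^{p-2}\|d\alpha\|^{2}\le(\int_{M}\|\alpha\|^{p})^{\frac{p-2}{p}}(\int_{M}\|d\alpha\|^{p})^{\frac2p}$ and similarly for $d^{*}\alpha$, combined with the convexity inequality relating $\|d\alpha\|^{p}+\|d^{*}\alpha\|^{p}$ to $(\|d\alpha\|^{2}+\|d^{*}\alpha\|^{2})^{p/2}$ and with the normalization identity above, produces the factor $2^{2/p-1}$ and yields $\lambda_{1}^{2/p}\ge k(n-k)H\big/\bigl(2^{2/p-1}(C+\tfrac{p-2}{2})\bigr)$; raising to the power $p/2$ is the theorem.

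The step I expect to be the main obstacle is precisely this last absorption: since the weight $\|\alpha\|^{p-2}$ cannot be commuted through $d$ and $d^{*}$, the $d\|\alpha\|$-terms genuinely survive, and keeping every Young's inequality and every H\"older/convexity step tight enough that nothing worse than $2^{2/p-1}(C+\tfrac{p-2}{2})$ remains is where the real bookkeeping lies --- this is what makes the $p\neq2$ estimate lossier than the $p=2$ case. A secondary technical point is the regularity of $\psi=\|\alpha\|^{\frac{p-2}{2}}\alpha$ (equivalently of the weight $\|\alpha\|^{p-2}$) near the zero set of $\alpha$ when $2\le p<4$, which should be dealt with by the standard perturbation of the exponent together with a limiting argument in the Sobolev framework of \cite{scott}.
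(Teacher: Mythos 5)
Your overall skeleton --- a Bochner--Weitzenb\"ock argument weighted by $\|\alpha\|^{p-2}$, the Gallot--Meyer twistor inequality producing $C$, the Weitzenb\"ock lower bound, then H\"older and the concavity of $x\mapsto x^{2/p}$ giving the factor $2^{2/p-1}$ --- matches the paper, and your final numerology is right. The gap is exactly the step you flag as ``the main obstacle'': the inequality $C\int_M(\|d\psi\|^{2}+\|d^{*}\psi\|^{2})\le(C+\tfrac{p-2}{2})\int_M\|\alpha\|^{p-2}(\|d\alpha\|^{2}+\|d^{*}\alpha\|^{2})$ is asserted, not proved, and the route through $\psi=\|\alpha\|^{\frac{p-2}{2}}\alpha$ makes it genuinely problematic. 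Squaring your expansions of $d\psi$ and $d^{*}\psi$ produces, besides $\|\alpha\|^{p-2}(\|d\alpha\|^{2}+\|d^{*}\alpha\|^{2})$ and the two cross terms, the strictly positive contribution $\tfrac{(p-2)^{2}}{4}\|\alpha\|^{p-4}\bigl(\| d\|\alpha\|\wedge\alpha\|^{2}+\|\iota_{\nabla\|\alpha\|}\alpha\|^{2}\bigr)=\tfrac{(p-2)^{2}}{4}\|\alpha\|^{p-2}|\nabla\|\alpha\||^{2}$, which is \emph{quadratic} in $p-2$ and admits no pointwise or Young-type bound by $\|\alpha\|^{p-2}(\|d\alpha\|^{2}+\|d^{*}\alpha\|^{2})$; controlling it would require a second Bochner argument and would in any case not return the clean constant $C+\tfrac{p-2}{2}$.

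The paper never forms $\|d\psi\|^{2}$. It pairs $\Delta_{p}\alpha=\lambda\|\alpha\|^{p-2}\alpha$ with $\Delta\alpha$ and integrates by parts once, so the weight is differentiated only linearly, yielding the identity
\begin{align*}
&\int_M\langle d(\|\alpha\|^{p-2})\wedge\alpha,d\alpha\rangle-\int_M\langle\iota_{\nabla\|\alpha\|^{p-2}}\alpha,d^{*}\alpha\rangle+\int_M\|\alpha\|^{p-2}\bigl(\|d\alpha\|^{2}+\|d^{*}\alpha\|^{2}\bigr)\\
&\quad=(p-2)\int_M\|\alpha\|^{p-2}|\nabla\|\alpha\||^{2}+\int_M\|\alpha\|^{p-2}\|\nabla\alpha\|^{2}+\int_M\|\alpha\|^{p-2}(W_k(\alpha),\alpha).
\end{align*}
The two cross terms, estimated via Lemma \ref{lemma1} and Young, cost exactly $\tfrac{p-2}{2}+\tfrac{p-2}{2}=p-2$ copies of $\int_M\|\alpha\|^{p-2}|\nabla\|\alpha\||^{2}$, which cancel the identical term arising from integrating $\|\alpha\|^{p-2}\tfrac12\Delta\|\alpha\|^{2}$ by parts; no quadratic term ever appears. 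Only then is the twistor inequality applied to $\|\nabla\alpha\|^{2}$ (not to $\|\nabla\psi\|^{2}$), giving $(C+\tfrac{p-2}{2})\int_M\|\alpha\|^{p-2}(\|d\alpha\|^{2}+\|d^{*}\alpha\|^{2})\ge\int_M\|\alpha\|^{p-2}(W_k(\alpha),\alpha)$, after which your H\"older/concavity endgame is correct. To repair your argument, replace the ground-state substitution by this one-sided integration by parts against $\Delta\alpha$.
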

\begin{remark}
When $p=2$, the above recovers the estimate due to Gallot-Meyer \cite{gallot-meyer} (see also \cite{gallot-meyer-2}), for $1 \leq k \leq \frac{n}{2}$,
\begin{equation*}
\lambda_1 \geq k(n-k+1)H.
\end{equation*}
\end{remark}

The organization of this paper is as follows.  In \S \ref{sec2} we review some known estimates for differential $k$-forms.  In \S\ref{variation} we show that the infimum can be characterized as an eigenvalue problem.  In \S\ref{sec4} we give the main estimate.  In \S\ref{sec5} we give a brief discussion on boundary conditions for differential forms and possible future directions.

\subsection*{Acknowledgments:} The author would like to thank Prof. Zhiqin Lu for very helpful discussions, suggestions and constant support, and to Prof. Guofang Wei and Prof. Qi Zhang for discussions and encouragement on the problem.  We also thank Prof. Nguyen Thac Dung for letting of know of an improvement on the constant, see Lemma \ref{lemma1}.

\section{Some estimates on $\Omega^k(M)$}\label{sec2}
We first recall the Weitzenb\"ock curvature
\begin{definition}
Let $p \in M$ and let $\{E_i\}_{i=1}^n$ be an orthonormal frame at $p$.  Then for $\alpha \in \Omega^k(M)$, define the Weitzenb\"ock curvature $W_k$ by
\begin{equation*}
W_k(\alpha)(X_1,\ldots,X_k) := \sum (R(E_j,X_i)\alpha)(X_1,\ldots,E_j,\ldots,X_k).
\end{equation*}
Note that on 1-forms, this is simply the Ricci tensor.
\end{definition}
If the eigenvalues of the curvature operator are bounded by $H\in \mathbb{R}$, we can show that
\begin{equation}\label{weitzenbocklower}
(W_k(\alpha),\alpha) \geq k(n+1-k)H\|\alpha\|^2.
\end{equation}
The Weitzenb\"ock curvature shows up in the main tool we will use in obtaining our estimate is the Bochner-Weitzenb\"ock formula for $k$-forms
\begin{equation}\label{bochner-weitzenbock}
\frac{1}{2}\Delta\|\alpha\|^2=(\Delta\alpha,\alpha)-\|\nabla\alpha\|^2-(W_k(\alpha),\alpha),
\end{equation}
where $\Delta:=\Delta_2 = dd^*+d^*d$.  Note that for exact 1-form $\alpha = df$, since $\nabla d f = \Hess f$, the usual Cauchy-Schwarz inequality will give us an estimate on the middle term.  For $k$-forms, we will need the following proved by Gallot-Meyer
\begin{lemma}[\cite{gallot-meyer}]
Let $\alpha \in \Omega^k(M)$, $1 \leq k \leq n-1$.  Then
\begin{equation}\label{twistor}
\|\nabla \alpha\|^2 \geq \frac{1}{k+1}\|d\alpha\|^2+\frac{1}{n-k+1}\|d^*\alpha\|^2.
\end{equation}
\end{lemma}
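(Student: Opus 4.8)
The plan is to view the covariant derivative $\nabla\alpha$ as a section of $T^*M\otimes\Lambda^k T^*M$ and to split this bundle pointwise into three mutually orthogonal pieces by means of the wedge map and the interior-product map. Fixing an orthonormal coframe $\{e^i\}$ dual to $\{E_i\}$ at a point, define $a\colon T^*M\otimes\Lambda^k T^*M\to\Lambda^{k+1}T^*M$ by $a(\xi\otimes\omega)=\xi\wedge\omega$ and $c\colon T^*M\otimes\Lambda^k T^*M\to\Lambda^{k-1}T^*M$ by $c(\xi\otimes\omega)=\iota_{\xi^\sharp}\omega$. The classical identities $d\alpha=\sum_i e^i\wedge\nabla_{E_i}\alpha$ and $d^*\alpha=-\sum_i\iota_{E_i}\nabla_{E_i}\alpha$ then read $d\alpha=a(\nabla\alpha)$ and $d^*\alpha=-c(\nabla\alpha)$, so that $\|d\alpha\|^2$ and $\|d^*\alpha\|^2$ are governed by the components of $\nabla\alpha$ in $\im(a^*)$ and $\im(c^*)$.

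Next I would compute the adjoints and their associated ``Laplacians''. A direct check gives $a^*\beta=\sum_i e^i\otimes\iota_{E_i}\beta$ and $c^*\gamma=\sum_i e^i\otimes(e^i\wedge\gamma)$, and then, using $\sum_i\|\iota_{E_i}\beta\|^2=(k+1)\|\beta\|^2$ and $\sum_i\iota_{E_i}(e^i\wedge\gamma)=(n-k+1)\gamma$, one obtains $aa^*=(k+1)\,\mathrm{Id}$ on $\Lambda^{k+1}T^*M$ and $cc^*=(n-k+1)\,\mathrm{Id}$ on $\Lambda^{k-1}T^*M$. Consequently the orthogonal projections of $\nabla\alpha$ onto $\im(a^*)$ and $\im(c^*)$ are $\frac{1}{k+1}a^*(d\alpha)$ and $-\frac{1}{n-k+1}c^*(d^*\alpha)$, and the identities $\langle a^*d\alpha,\nabla\alpha\rangle=\langle d\alpha,a\nabla\alpha\rangle=\|d\alpha\|^2$ and $\langle c^*d^*\alpha,\nabla\alpha\rangle=\langle d^*\alpha,c\nabla\alpha\rangle=-\|d^*\alpha\|^2$ show that these projections have squared norms $\frac{1}{k+1}\|d\alpha\|^2$ and $\frac{1}{n-k+1}\|d^*\alpha\|^2$ respectively.

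The remaining ingredient is that $\im(a^*)$ and $\im(c^*)$ are orthogonal, which is immediate from $c\,a^*=0$: indeed $c\,a^*\beta=\sum_i\iota_{E_i}\iota_{E_i}\beta=0$. Therefore, writing $\nabla\alpha=\frac{1}{k+1}a^*(d\alpha)-\frac{1}{n-k+1}c^*(d^*\alpha)+\tau$ with $\tau$ orthogonal to the first two terms (the ``twistor part''), the Pythagorean identity yields
\begin{equation*}
\|\nabla\alpha\|^2=\frac{1}{k+1}\|d\alpha\|^2+\frac{1}{n-k+1}\|d^*\alpha\|^2+\|\tau\|^2\geq\frac{1}{k+1}\|d\alpha\|^2+\frac{1}{n-k+1}\|d^*\alpha\|^2,
\end{equation*}
which is the assertion; the hypothesis $1\leq k\leq n-1$ is used only to guarantee that $\Lambda^{k\pm1}T^*M$ are nonzero, so that both projections genuinely occur.

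I expect the main bookkeeping obstacle to be pinning down the structure constants $aa^*=(k+1)\,\mathrm{Id}$ and $cc^*=(n-k+1)\,\mathrm{Id}$ — these are precisely the source of the $k+1$ and $n-k+1$ in the statement — together with fixing the sign and normalization conventions relating $a,c$ to $d,d^*$ so that $\|a(\nabla\alpha)\|$ equals $\|d\alpha\|$ rather than some multiple of it. Once those pointwise linear-algebra identities are in hand, the orthogonality $\im(a^*)\perp\im(c^*)$ and the concluding Pythagorean step are immediate, and since the entire argument is pointwise no global analysis is required.
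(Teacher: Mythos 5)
Your proposal is correct and follows essentially the same route as the paper: the paper's proof decomposes $\nabla\alpha$ using the maps $\wedge$ and $\iota$ (your $a$ and $c$), computes the same structure constants $\wedge\circ\wedge^*=(k+1)\,\mathrm{Id}$ and $\iota\circ\iota^*=(n-k+1)\,\mathrm{Id}$, and concludes by the same orthogonal projection and Pythagorean argument. The only difference is notational.
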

We give a proof for completeness.  The proof we give is in the context of conformal Killing forms and can be found in various sources, for instance, \cite{moroianu-semmelmann}.
\begin{proof}
Consider the two linear maps
\begin{align*}
&\iota:TM\otimes \Omega^k(M) \to \Omega^{k-1}(M)\\
&\iota(v,\alpha) = \iota_v\alpha
\end{align*}
and
\begin{align*}
&\wedge:\Omega^1(M)\otimes \Omega^{k}(M) \to \Omega^{k+1}(M)\\
&\wedge(\beta,\alpha)=\beta \wedge \alpha.
\end{align*}
Let $\iota^*$ and $\wedge^*$ be their metric adjoint.  Then
\begin{align*}
\wedge \circ \iota^*(\alpha)= 0  \text{ and } \iota\circ\wedge^*(\alpha) = 0,
\end{align*}
so that we get the decomposition
\begin{align*}
TM\otimes \Omega^k(M) \simeq \im(\iota^*)\oplus \im(\wedge^*)\oplus Y
\end{align*}
where $Y$ is the orthogonal complement.  By direct computation, we have for $\alpha \in \Omega^k(M)$,
\begin{align*}
\iota\circ \iota^*(\alpha) &= (n-k+1)\alpha \text{ and } \wedge\circ \wedge^*(\alpha) = (k+1)\alpha.
\end{align*}
Viewing $\nabla\alpha \in \Gamma(TM\otimes \Omega^k(M))$, From the decomposition,
\begin{align*}
\nabla \alpha = \iota^* \beta + \wedge^*\gamma + \delta,
\end{align*}
applying $\iota$, we have
\begin{align*}
\iota \nabla\alpha = (n-k+1)\beta.
\end{align*}
So the projection operator onto $\im(\iota^*)$ is given by
\begin{align*}
\pi_{\iota^*}\nabla \alpha= \frac{1}{n-k+1}\iota^*\iota \nabla \alpha
\end{align*}
and similarly
\begin{align*}
\pi_{\wedge^*}\nabla\alpha = \frac{1}{k+1}\wedge^*\wedge \nabla\alpha.
\end{align*}
Let $T\alpha := \pi_{T}\alpha$ the projection onto the orthogonal complement space.  Since
\begin{align*}
d\alpha = \wedge (\nabla \alpha) \text{ and } d^*\alpha=-\iota(\nabla \alpha),
\end{align*}
we have the decomposition
\begin{align*}
T\alpha (X) = \nabla_X\alpha-\frac{1}{k+1}\iota_Xd\alpha + \frac{1}{n-k+1}X^*\wedge d^*\alpha
\end{align*}
and taking the norm gives us
\begin{align*}
\|\nabla \alpha\|^2 = \|T\alpha\|^2+\frac{1}{k+1}\|d\alpha\|^2+\frac{1}{n-k+1}\|d^*\alpha\|^2,
\end{align*}
which implies \eqref{twistor}.
\end{proof}
\begin{remark}
The projection operator $T$ defined above is called the twistor operator and a form $\alpha \in \Omega^k(M)$ is called a conformal Killing form if $T\alpha = 0$.
\end{remark}
The following lemma was pointed out by N.T. Dung and gives us a way to control the interior product by using an orthogonal decomposition of forms as the image under an interior product.  
\begin{lemma}[Lemma 3.5 \cite{dung}]\label{lemma1}
Let $V \in TM$, $\alpha \in \Omega^{k+1}$, $\beta \in \Omega^k$.  Then
\begin{equation*}
|\langle \iota_V\alpha,\beta\rangle| \leq \|V\|\|\alpha\|\|\beta\|.
\end{equation*}
\end{lemma}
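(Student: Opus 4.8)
The plan is to deduce the estimate from the pointwise Cauchy--Schwarz inequality together with the elementary operator bound $\|\iota_V\gamma\| \le \|V\|\,\|\gamma\|$, valid for the contraction of any form by a vector at a point. Granting that bound, Cauchy--Schwarz applied to the pointwise inner product of the two $k$-forms $\iota_V\alpha$ and $\beta$ gives
\[
|\langle \iota_V\alpha,\beta\rangle| \le \|\iota_V\alpha\|\,\|\beta\| \le \|V\|\,\|\alpha\|\,\|\beta\|,
\]
which is exactly the asserted inequality. So the whole matter reduces to showing that contraction by $V$ has operator norm at most $\|V\|$ on the exterior algebra.

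For that, I would fix a point, assume $V \neq 0$ (the case $V = 0$ being trivial), and set $e := V/\|V\|$, so that $\iota_V = \|V\|\,\iota_e$ with $e$ a unit vector. Completing $e$ to an orthonormal basis $e = e_1, e_2, \dots, e_n$ of $T_xM$ and passing to the dual coframe, every $\alpha \in \Lambda^{k+1}T_x^*M$ splits uniquely as $\alpha = e^\flat \wedge \gamma + \delta$, where $\gamma$ and $\delta$ involve only $e_2^\flat, \dots, e_n^\flat$; in particular $\iota_e\gamma = \iota_e\delta = 0$. Since $\iota_e$ is an antiderivation with $\iota_e e^\flat = 1$, one gets $\iota_e\alpha = \gamma$, while orthogonality of the splitting gives the Pythagorean identity $\|\alpha\|^2 = \|\gamma\|^2 + \|\delta\|^2$. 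Hence $\|\iota_e\alpha\| = \|\gamma\| \le \|\alpha\|$, and multiplying through by $\|V\|$ completes the bound. Equivalently, one may use the adjoint relation $\langle \iota_e\mu,\nu\rangle = \langle \mu, e^\flat\wedge\nu\rangle$ and the Clifford identity $\iota_e(e^\flat\wedge\,\cdot) + e^\flat\wedge(\iota_e\,\cdot) = \mathrm{id}$ for unit $e$, which together yield $\|\iota_e\alpha\|^2 + \|e^\flat\wedge\alpha\|^2 = \|\alpha\|^2$.

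There is no genuine obstacle here: the statement is a pointwise fact about the exterior algebra of a Euclidean space, and the only care needed lies in the bookkeeping with the musical isomorphism and the antiderivation identities, which an orthonormal frame adapted to $V$ renders transparent. The role of the lemma in what follows is to provide a clean way to estimate the cross terms involving an interior product --- such as the $\iota_X d\alpha$ term in the twistor decomposition --- when these are combined with the Bochner--Weitzenb\"ock formula.
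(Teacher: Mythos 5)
Your argument is correct, and it matches the approach the paper points to: the paper does not reprove this lemma but cites it as Lemma 3.5 of \cite{dung}, describing it as controlling the interior product ``by using an orthogonal decomposition of forms,'' which is exactly your splitting $\alpha = e^\flat\wedge\gamma + \delta$ with $\iota_e\delta = 0$. Your reduction to the pointwise operator bound $\|\iota_V\alpha\|\le\|V\|\,\|\alpha\|$ followed by Cauchy--Schwarz, together with either the Pythagorean identity for that splitting or the adjointness/Clifford identity $\|\iota_e\alpha\|^2+\|e^\flat\wedge\alpha\|^2=\|\alpha\|^2$, is a complete and correct proof of the stated inequality.
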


\section{Variational characterization of the eigenvalue}\label{variation}
In this section we will compute the Euler-Lagrange equation of \eqref{Lpdirichlet} and show that the extremal problem can be reformulated as an eigenvalue problem.  Analogous to the $0$-form (function) case, we will look at weak solutions lying the $(1,p)$-Sobolev space of differential $k$-forms first defined by Scott in \cite{scott} as
\begin{equation*}
\mathcal{W}^{1,p}(\Omega^k(M)) := \left\{  \alpha \in W(\Omega^k(M)) \ | \ \alpha, d\alpha, d^*\alpha \in L^p(\Omega^{*}(M))  \right\}
\end{equation*}
where $W(\Omega^k(M))$ is the classical Sobolev space of $k$-forms, i.e., $\alpha$ is locally integrable and admits a generalized gradient.

\begin{definition}
We say that $\lambda$ is an eigenvalue, if there exists a $k$-form $\alpha \in \mathcal{W}^{1,p}(\Omega^k(M))$ such that
\begin{equation*}
\int_M \|d\alpha\|^{p-2}\langle d\alpha,d\beta\rangle + \int_M\|d^*\alpha\|^{p-2}\langle d^*\alpha,d^*\beta\rangle = \lambda \int_M \|\alpha\|^{p-2}\langle \alpha,\beta\rangle,
\end{equation*}
for any $\beta \in C^\infty(\Omega^k(M))$.
\end{definition}
We will show the first nonzero eigenvalue $\lambda_1$ can be characterized as the infimum of the $L^p$-Dirichlet energy over the space $A_k$ given in \eqref{orthoharmo}.
\begin{proposition}
For closed manifolds $M$ and $p\geq 2$, 
\begin{equation*}
\lambda_1 = \inf \left\{\int_M \|d\alpha\|^p + \|d^*\alpha\|^p \ | \ \alpha \in A_k \right\}.
\end{equation*}
\end{proposition}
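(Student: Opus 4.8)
The plan is to exhibit the infimum $\mu:=\inf\{\mathcal F[\alpha]:\alpha\in A_k\}$ as a genuine eigenvalue in the weak sense of the preceding definition and then to show it is the smallest nonzero one; both inequalities of the proposition follow. First I would check that $A_k\neq\emptyset$, so $\mu<\infty$: fixing a basis $\omega_1,\dots,\omega_b$ of the finite-dimensional space $\mathcal H^k(M)$, the map $\alpha\mapsto\big(\int_M\|\alpha\|^{p-2}\langle\alpha,\omega_i\rangle\big)_{i=1}^b$ is continuous on $L^p(\Omega^k(M))$ and odd, so a Borsuk--Ulam/genus argument gives it a zero on the unit $L^p$-sphere, which after rescaling lies in $A_k$. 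Next I would prove existence of a minimizer by the direct method: a minimizing sequence $\alpha_j\in A_k$ has $\mathcal F[\alpha_j]$, and hence $\|\alpha_j\|_p,\|d\alpha_j\|_p,\|d^*\alpha_j\|_p$, bounded, so $\alpha_j$ is bounded in $\mathcal W^{1,p}(\Omega^k(M))$; since this space is reflexive and, by the $L^p$ Gaffney inequality together with Rellich, compactly embedded in $L^p$ (Scott \cite{scott}), a subsequence satisfies $\alpha_j\rightharpoonup\alpha_*$ in $\mathcal W^{1,p}$ and $\alpha_j\to\alpha_*$ strongly in $L^p$. Strong $L^p$ convergence, with continuity of $\alpha\mapsto\|\alpha\|^{p-2}\alpha$ from $L^p$ into $L^{p/(p-1)}$, preserves both constraints, so $\alpha_*\in A_k$; weak lower semicontinuity of $\alpha\mapsto\|d\alpha\|_p^p+\|d^*\alpha\|_p^p$ (using boundedness of $d,d^*\colon\mathcal W^{1,p}\to L^p$) gives $\mathcal F[\alpha_*]\le\liminf\mathcal F[\alpha_j]=\mu$, hence $\mathcal F[\alpha_*]=\mu$. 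The same compactness shows $\mu>0$: if $\mu=0$ the limit would satisfy $d\alpha_*=d^*\alpha_*=0$, i.e.\ $\alpha_*\in\mathcal H^k(M)$, whereupon the orthogonality relation defining $A_k$ applied with $\omega=\alpha_*$ gives $\int_M\|\alpha_*\|^p=0$, contradicting $\fint_M\|\alpha_*\|^p=1$.

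The core of the argument—and the step I expect to be the main obstacle—is the Euler--Lagrange equation. Fix $\beta\in C^\infty(\Omega^k(M))$ and write $\mathcal F'[\alpha](\eta):=\tfrac{d}{ds}\mathcal F[\alpha+s\eta]\big|_{s=0}=p\int_M\|d\alpha\|^{p-2}\langle d\alpha,d\eta\rangle+p\int_M\|d^*\alpha\|^{p-2}\langle d^*\alpha,d^*\eta\rangle$ for the Gâteaux derivative (well-defined for $p\ge2$). The plan is to construct an admissible variation $s\mapsto\alpha_s:=(1+\tau_0(s))\alpha_*+s\beta+\sum_{i=1}^b\tau_i(s)\omega_i$ staying in $A_k$, with $\alpha_0=\alpha_*$, by solving the $b+1$ constraint equations $\fint_M\|\alpha_s\|^p=1$ and $\int_M\|\alpha_s\|^{p-2}\langle\alpha_s,\omega_j\rangle=0$ $(1\le j\le b)$ for the corrections $(\tau_0(s),\dots,\tau_b(s))$ via the implicit function theorem. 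Using the defining relations of $A_k$, the Jacobian of these constraints with respect to $(\tau_0,\dots,\tau_b)$ at $s=0$ is block-diagonal, with blocks the scalar $p\fint_M\|\alpha_*\|^p=p$ and the symmetric matrix $M_{ij}=(p-2)\int_M\|\alpha_*\|^{p-4}\langle\alpha_*,\omega_i\rangle\langle\alpha_*,\omega_j\rangle+\int_M\|\alpha_*\|^{p-2}\langle\omega_i,\omega_j\rangle$; for $p\ge2$ both summands of $M$ are positive semidefinite, and the second is positive definite because $\|\alpha_*\|^{p-2}>0$ off a null set, the $\omega_i$ are linearly independent, and a nonzero harmonic form has zero set of measure zero (unique continuation for the Hodge Laplacian), so $M$ is invertible and the implicit function theorem supplies $C^1$ corrections with $\tau_i(0)=0$. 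Differentiating the normalization constraint at $s=0$ gives $\tau_0'(0)=-\fint_M\|\alpha_*\|^{p-2}\langle\alpha_*,\beta\rangle$, and since $\mathcal F'[\alpha_*](\omega_i)=0$ (because $d\omega_i=d^*\omega_i=0$) and $\mathcal F'[\alpha_*](\alpha_*)=p\,\mathcal F[\alpha_*]=p\mu$, the minimality condition $\tfrac{d}{ds}\mathcal F[\alpha_s]\big|_{s=0}=0$ reduces exactly to the weak eigenvalue equation
\begin{multline*}
\int_M\|d\alpha_*\|^{p-2}\langle d\alpha_*,d\beta\rangle+\int_M\|d^*\alpha_*\|^{p-2}\langle d^*\alpha_*,d^*\beta\rangle\\
=\mu\fint_M\|\alpha_*\|^{p-2}\langle\alpha_*,\beta\rangle
\end{multline*}
for all $\beta\in C^\infty(\Omega^k(M))$; that is, $\alpha_*$ is an eigenform with eigenvalue $\mu$, and $\mu\neq0$ by the previous paragraph. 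Note that no regularity of $\alpha_*$ beyond $\mathcal W^{1,p}$ is needed, since the paper's notion of eigenvalue is the weak one; the delicate points—building a variation that stays inside $A_k$ and verifying invertibility of $M$—are precisely where $p\ge2$ and the exact shape of $A_k$ enter (for $1<p<2$ the first summand of $M$ would contribute with the wrong sign).

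Finally I would check that $\mu$ is the \emph{first} nonzero eigenvalue. Let $\lambda\neq0$ be any eigenvalue with eigenform $\gamma$, normalized so $\fint_M\|\gamma\|^p=1$. Testing the weak eigenvalue equation against a harmonic form $\omega\in\mathcal H^k(M)$ annihilates the left-hand side (since $d\omega=d^*\omega=0$) and forces $\int_M\|\gamma\|^{p-2}\langle\gamma,\omega\rangle=0$ because $\lambda\neq0$, so $\gamma\in A_k$; testing against $\gamma$ itself gives $\mathcal F[\gamma]=\lambda$, whence $\lambda=\mathcal F[\gamma]\ge\mu$. Together with the previous paragraphs this yields $\lambda_1=\mu=\inf\{\mathcal F[\alpha]:\alpha\in A_k\}$. (All identifications here use the normalization $\fint_M\|\cdot\|^p=1$ built into $A_k$; reading the right-hand side of the eigenvalue equation with $\fint_M$ as well, in line with \eqref{orthoharmo}, makes the constants match verbatim, otherwise they differ by the factor $\vol(M)$.)
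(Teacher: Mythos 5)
Your argument is correct, and it is substantially more complete than the one in the paper, though the analytic heart is the same first-variation computation. The paper's proof is a formal Lagrange-multiplier argument: it assumes a minimizer exists, introduces multipliers $\lambda$ and $\mu$ for the two constraints, and kills $\mu$ by testing against the harmonic form $\omega$; it does not address nonemptiness of $A_k$, existence of a minimizer, positivity of the infimum, or why the resulting eigenvalue is the \emph{smallest} nonzero one. You supply all of these: Borsuk--Ulam for $A_k\neq\emptyset$, the direct method with the $L^p$ Gaffney--Rellich compactness from \cite{scott} for existence and for $\mu>0$, and the comparison argument (testing an arbitrary eigenform's weak equation against $\mathcal H^k(M)$ to place it in $A_k$) for minimality. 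Your handling of the constraints is also genuinely different in form: instead of invoking an abstract multiplier rule on a constraint set that would first need to be shown to be a Banach manifold, you build an admissible curve in $A_k$ by the implicit function theorem and check invertibility of the constraint Jacobian, where the positive-definiteness of the block $M_{ij}$ (via unique continuation for harmonic forms and $p\ge 2$) replaces the paper's step of setting $\beta=\omega$ to conclude $\mu=0$ --- the two devices play the same role. The payoff of the paper's route is brevity; the payoff of yours is that it is actually a proof of the proposition as stated rather than a derivation of the Euler--Lagrange equation. Your closing remark about the normalization is also a legitimate catch: with $\fint_M\|\alpha\|^p=1$ in \eqref{orthoharmo} but $\int_M$ on the right-hand side of the eigenvalue equation, the identity $\lambda_1=\inf\mathcal F$ holds only up to a factor of $\vol(M)$ unless the two normalizations are made consistent (the rest of the paper, e.g.\ the final step of \S\ref{sec4}, implicitly uses the consistent convention $\int_M\|d\alpha\|^p+\|d^*\alpha\|^p=\lambda\int_M\|\alpha\|^p$).
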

\begin{proof}
Let $\omega$ be a fixed harmonic form and let $\beta(t) \in  A$ for small $t>0$ such that $\beta(0) = \alpha$.  Computing the first variation of \eqref{Lpdirichlet}, we have
\begin{align*}
\frac{d}{dt}\mathcal{F}[\beta(t)]\biggr|_{t=0} &= p\int_M \|d\alpha\|^{p-2}\langle d\alpha,d\beta'(0)\rangle + \|d^*\alpha\|^{p-2}\langle d^*\alpha,d^*\beta'(0)\rangle\\
&=p\int_M \langle \Delta_p\alpha,\beta'(0)\rangle .
\end{align*}
Next we compute the variation of the constraints so that
\begin{align*}
\frac{d}{dt}\int_M \|\beta\|^p\biggr|_{t=0} &= p\int_M |\alpha|^{p-2}\langle \alpha,\beta'(0)\rangle
\end{align*}
and
\begin{align*}
\frac{d}{dt}\int_M \|\beta\|^{p-2}\langle\beta,\omega\rangle\biggr|_{t=0} &= (p-2)\int_M \|\alpha\|^{p-4}\langle \alpha,\beta'(0)\rangle\langle \alpha,\omega\rangle + \|\alpha\|^{p-2}\langle \beta'(0),\omega\rangle.
\end{align*}
By Lagrange multiplier method, there must be some $\lambda$ and $\mu$ such that for $\beta \in \Omega^k(M)$, 
\begin{align*}
\int_M \langle \Delta_p\alpha,\beta\rangle = \lambda \int_M\|\alpha\|^{p-2}\langle \alpha,\beta\rangle+\mu\int_M\|\alpha\|^{p-4}\langle \alpha,\beta\rangle\langle\alpha,\omega\rangle +\|\alpha\|^{p-2}\langle\beta,\omega\rangle.
\end{align*}
Setting $\beta = \omega$, we have
\begin{align*}
0=\mu\int_M\|\alpha\|^{p-4}\langle\alpha,\omega\rangle^2 +\|\alpha\|^{p-2}\|\omega\|^2 
\end{align*}
so that $\mu = 0$.  Therefore,
\begin{equation*}
\Delta_p\alpha = \lambda\|\alpha\|^{p-2}\alpha.
\end{equation*}
\end{proof}

\section{Proof of theorem \ref{mainthm}}\label{sec4}
We will consider the following integral
\begin{align*}
\int_M \langle \Delta_p\alpha,\Delta\alpha\rangle = \int_M\langle \Delta_p\alpha,dd^*\alpha\rangle + \int_M\langle \Delta_p,d^*d\alpha\rangle.
\end{align*} 
Let $\alpha \in \Omega^k(M)$ be an eigenform satisfying \eqref{eigenform}.  Then
\begin{align}
\begin{split}\label{eq1}
\int_M \langle \Delta_{p}\alpha,d^*d\alpha\rangle &=\lambda \int_M\|\alpha\|^{p-2}\langle \alpha, d^*d\alpha\rangle\\
&=\lambda \int_M \langle d(\|\alpha\|^{p-2}\alpha),d\alpha\rangle \\
&=\lambda\int_M\langle d(\|\alpha \|^{p-2})\wedge \alpha,d\alpha\rangle + \lambda\int_M\|\alpha\|^{p-2}\|d\alpha\|^2
\end{split}
\end{align}
and
\begin{align}
\begin{split}\label{eq2}
\int_M\langle \Delta_{p}\alpha,dd^*\alpha\rangle &= \lambda\int_M \|\alpha\|^{p-2}\langle \alpha,dd^*\alpha\rangle \\
&=\lambda \int_M \langle d^*(\|\alpha\|^{p-2}\alpha),d^*\alpha\rangle \\
&=\lambda \int_M \|\alpha\|^{p-2}\|d^*\alpha\|^2 - \lambda\int_M\langle \iota_{\nabla \|\alpha\|^{p-2}}\alpha,d^*\alpha\rangle.
\end{split}
\end{align}
On the other hand, by using the Bochner-Weitzenb\"ock formula \eqref{bochner-weitzenbock} we have
\begin{align}
\begin{split}\label{eq3}
\int_M \langle \Delta_{p}\alpha,\Delta\alpha\rangle &= \lambda\int_M\|\alpha\|^{p-2}\langle \alpha, \Delta \alpha\rangle \\
&=\lambda \int_M \left((p-2)\|\alpha\|^{p-2}|\nabla\|\alpha\||^2+\|\alpha\|^{p-2}\|\nabla \alpha\|^2+\|\alpha\|^{p-2}(W_k(\alpha),\alpha)\right).
\end{split}
\end{align}
Combining \eqref{eq1}, \eqref{eq2}, and \eqref{eq3}, we obtain
\begin{align}
\begin{split}\label{maineq}
& \int_M \langle d(\|\alpha\|^{p-2})\wedge \alpha,d\alpha \rangle - \int_M\langle\iota_{\nabla\|\alpha\|^{p-2}}\alpha,d^*\alpha\rangle +  \int_M \|\alpha\|^{p-2}\|d\alpha\|^2 + \int_M \|\alpha\|^{p-2}\|d^*\alpha\|^2\\
&=\int_M \left((p-2)\|\alpha\|^{p-2}|\nabla\|\alpha\||^2+\|\alpha\|^{p-2}\|\nabla \alpha\|^2+\|\alpha\|^{p-2}(W_k(\alpha),\alpha)\right).
\end{split}
\end{align}
Using Lemma \ref{lemma1}, the first term of \eqref{maineq} can be estimated as
\begin{align*}
\int_M \langle d(\|\alpha\|^{p-2})\wedge \alpha,d\alpha\rangle &= \int_M \langle \alpha,\iota_{\nabla\|\alpha\|^{p-2}}(d\alpha)\rangle \\
&\leq \int_M\|\nabla \|\alpha\|^{p-2}\|\|d\alpha\|\|\alpha\| \\
&=(p-2)\int_M \|\alpha\|^{\frac{p-2}{2}}\|\nabla\|\alpha\|\|\|\alpha\|^{\frac{p-2}{2}}\|d\alpha\|\\
&\leq \frac{(p-2)}{2}\int_M\|\alpha\|^{p-2}\|\nabla\|\alpha\|\|^2 + \frac{(p-2)}{2}\int_M \|\alpha\|^{p-2}\|d\alpha\|^2
\end{align*}
and similarly for the second term,
\begin{align*}
-\int_M\langle \iota_{\nabla\|\alpha\|^{p-2}}\alpha,d^*\alpha\rangle &\leq  \int_M \|\nabla\|\alpha\|^{p-2}\|\alpha\|\|d^*\alpha\| \\
&= (p-2)\int_M\|\alpha\|^{\frac{p-2}{2}}\|\nabla\|\alpha\|\|\|\alpha\|^{\frac{p-2}{2}}\|d^*\alpha\| \\
&\leq \frac{(p-2)}{2}\int_M\|\alpha\|^{p-2}\|\nabla\|\alpha\|\|^2 + \frac{(p-2)}{2}\int_M\|\alpha\|^{p-2}\|d^*\alpha\|^2.
\end{align*}
Applying these estimates to \eqref{maineq}, we get
\begin{align*}
&\frac{(p-2)+2}{2}\int_M\|\alpha\|^{p-2}\|d\alpha\|^2 + \frac{(p-2)+2}{2}\int_M\|\alpha\|^{p-2}\|d^*\alpha\|^2 \\
&\hspace{0.4 in}\geq \int_M\|\alpha\|^{p-2}\|\nabla \alpha\|^2+\int_M\|\alpha\|^{p-2}(W_k(\alpha),\alpha)\\
&\hspace{0.4 in}\geq \frac{1}{k+1}\int_M \|\alpha\|^{p-2}\|d\alpha\|^2+\frac{1}{n-k+1}\int_M\|\alpha\|^{p-2}\|d^*\alpha\|^2+\int_M\|\alpha\|^{p-2}(W_k(\alpha),\alpha).
\end{align*}
Let 
\begin{equation*}
C := \max\left\{\frac{k}{k+1},\frac{n-k}{n-k+1}\right\}.
\end{equation*}
Using
\begin{align*}
\int_M\|\alpha\|^{p-2}\|d\alpha\|^2 \leq \left( \int_M \|\alpha\|^p\right)^{1-\frac{2}{p}} \left( \int_M\|d\alpha\|^p\right)^{\frac{2}{p}}
\end{align*}
and
\begin{align*}
\int_M\|\alpha\|^{p-2}\|d^*\alpha\|^2 \leq \left( \int_M \|\alpha\|^p\right)^{1-\frac{2}{p}} \left( \int_M\|d^*\alpha\|^p\right)^{\frac{2}{p}},
\end{align*}
we have
\begin{align*}
\left(C+\frac{(p-2)}{2}\right)&\left(\int_M\|\alpha\|^p\right)^{1-\frac{2}{p}}\left[ \left( \int_M\|d\alpha\|^p\right)^{\frac{2}{p}}+ \left( \int_M\|d^*\alpha\|^p\right)^{\frac{2}{p}}\right]\\
& \geq \int_M \|\alpha\|^{p-2}(W_k(\alpha),\alpha).
\end{align*}
For $p\geq 2$, and using the lower bound of the Weitzenb\"ock curvature \eqref{weitzenbocklower}, we have
\begin{align*}
2^{\frac{2}{p}-1}\left(C+\frac{(p-2)}{2}\right)\left(\int_M\|\alpha\|^p\right)^{1-\frac{2}{p}} \left( \int_M\|d\alpha\|^p+\|d^*\alpha\|^p\right)^{\frac{2}{p}} \geq k(n-k)H\int_M \|\alpha\|^p.
\end{align*}
Using the fact that $\int_M \|d\alpha\|^p+\|d^*\alpha\|^p = \lambda \int_M\|\alpha\|^p$ for eigenform $\alpha$, 
we get
\begin{align*}
\lambda^{\frac{2}{p}} \geq \frac{k(n-k)}{2^{\frac{2}{p}-1}\left(C+\frac{(p-2)}{2}\right)}.
\end{align*}

\section{Boundary conditions}\label{sec5}
In this section we briefly discuss the situation of a compact manifold $M$ with nonempty smooth boundary $\dd M$.  Let $n$ denote the unit outer normal vector and let $J:\dd M\to M$ be the inclusion.  Then $J^*\alpha$ is the restriction of a form to the boundary.  Then $d$ and its adjoint $d^*$ are related with an additional boundary term given by
\begin{align*}
\int_M\langle d\alpha,\beta\rangle = \int_M \langle \alpha,d^*\beta\rangle +\int_{\dd M}\langle J^*(\alpha),\iota_n\beta\rangle, \quad \alpha \in \Omega^{k}(M), \beta \in \Omega^{k+1}(M).
\end{align*}
and the corresponding Green's formula for the $p$-Laplacian is 
\begin{align*}
(\Delta_p\alpha,\beta) &= \int_M\|d\alpha\|^{p-2}\langle d\alpha,d\beta\rangle + \int_M\|d^*\alpha\|^{p-2}\langle d^*\alpha,d^*\beta\rangle \\
&\hspace{0.2 in} -\int_{\dd M} \langle \iota_n(\|d\alpha\|^{p-2}d\alpha),J^*(\beta)\rangle +\int_{\dd M} \langle \|d^*\alpha\|^{p-2}J^*(d^*\alpha),\iota_n\beta\rangle.
\end{align*}
The two most common boundary conditions for the classical Laplacian eigenvalue problem  are the Dirichlet and Neumann boundary condition.  For the Hodge-Laplacian, the analogous boundary conditions are the absolute boundary condition
\begin{equation*}
\begin{cases}
\iota_n\alpha = 0 \\
\iota_nd\alpha =0, \quad \text{ on }\dd M
\end{cases}
\end{equation*}
and the relative boundary condition
\begin{equation*}
\begin{cases}
J^*(\alpha) = 0\\
J^*(d^*\alpha) =0, \quad \text{ on }\dd M.
\end{cases}
\end{equation*}
The essential feature of the boundary condition is that if $\alpha$ satisfies either of the boundary conditions, then $\Delta_p\alpha = 0$ implies $d\alpha = 0$ and $d^*\alpha = 0$.  The boundary terms that will be introduced to \eqref{maineq} are
\begin{align*}
& \int_M \langle d(\|\alpha\|^{p-2})\wedge \alpha,d\alpha \rangle - \int_M\langle\iota_{\nabla\|\alpha\|^{p-2}}\alpha,d^*\alpha\rangle +  \int_M \|\alpha\|^{p-2}\|d\alpha\|^2 + \int_M \|\alpha\|^{p-2}\|d^*\alpha\|^2\\
&\hspace{0.2 in}-\int_{\dd M} \|\alpha\|^{p-2}\langle J^*(\alpha),\iota_n(d\alpha)\rangle +\int_{\dd M}\|\alpha\|^{p-2} \langle J^*(d^*\alpha),\iota_n(\alpha)\rangle\\
&=\int_M \left((p-2)\|\alpha\|^{p-2}|\nabla\|\alpha\||^2+\|\alpha\|^{p-2}\|\nabla \alpha\|^2+\|\alpha\|^{p-2}(W_k(\alpha),\alpha)\right).
\end{align*}
Since the boundary terms will vanish under either of the boundary conditions, we get the same estimate for the boundary value problem as well.  It would be interesting to see what the Reilly formula, for instance a generalization of Theorem 3 in \cite{raulot-savo} would be in this context, however due to the asymmetry of the weight function in the $p$-Laplacian, it is not immediate what the appropriate Bochner-Weitzenb\"ock type formula would be for $\Delta_p$.

\begin{bibdiv}
\begin{biblist}
\bib{dung}{article}{
author={Dung, Nguyen Thac},
author={Sung, Chiung Jue Anna },
title={Analysis of weighted $p$-harmonic forms and applications},
journal={(preprint)}
date={2019}
}

\bib{gallot-meyer}{article}{
   author={Gallot, S.},
   author={Meyer, D.},
   title={Sur la premi\`ere valeur propre du $p$-spectre pour les vari\'{e}t\'{e}s \`a
   op\'{e}rateur de courbure positif},
   language={French},
   journal={C. R. Acad. Sci. Paris S\'{e}r. A-B},
   volume={276},
   date={1973},
   pages={A1619--A1621},
   review={\MR{0322735}},
}
\bib{gallot-meyer-2}{article}{
   author={Gallot, S.},
   author={Meyer, D.},
   title={Op\'{e}rateur de courbure et laplacien des formes diff\'{e}rentielles
   d'une vari\'{e}t\'{e} riemannienne},
   language={French},
   journal={J. Math. Pures Appl. (9)},
   volume={54},
   date={1975},
   number={3},
   pages={259--284},
   issn={0021-7824},
   review={\MR{0454884}},
}

\bib{kwong}{article}{
   author={Kwong, Kwok-Kun},
   title={Some sharp Hodge Laplacian and Steklov eigenvalue estimates for
   differential forms},
   journal={Calc. Var. Partial Differential Equations},
   volume={55},
   date={2016},
   number={2},
   pages={Art. 38, 14},
   issn={0944-2669},
   review={\MR{3478292}},
   doi={10.1007/s00526-016-0977-8},
}

\bib{lindqvist}{book}{
   author={Lindqvist, Peter},
   title={Notes on the $p$-Laplace equation},
   series={Report. University of Jyv\"{a}skyl\"{a} Department of Mathematics and
   Statistics},
   volume={102},
   publisher={University of Jyv\"{a}skyl\"{a}, Jyv\"{a}skyl\"{a}},
   date={2006},
   pages={ii+80},
   isbn={951-39-2586-2},
   review={\MR{2242021}},
}

\bib{matei}{article}{
   author={Matei, Ana-Maria},
   title={First eigenvalue for the $p$-Laplace operator},
   journal={Nonlinear Anal.},
   volume={39},
   date={2000},
   number={8, Ser. A: Theory Methods},
   pages={1051--1068},
   issn={0362-546X},
   review={\MR{1735181}},
   doi={10.1016/S0362-546X(98)00266-1},
}

\bib{moroianu-semmelmann}{article}{
   author={Moroianu, Andrei},
   author={Semmelmann, Uwe},
   title={Twistor forms on K\"{a}hler manifolds},
   journal={Ann. Sc. Norm. Super. Pisa Cl. Sci. (5)},
   volume={2},
   date={2003},
   number={4},
   pages={823--845},
   issn={0391-173X},
   review={\MR{2040645}},
}

\bib{naber-valtorta}{article}{
   author={Naber, Aaron},
   author={Valtorta, Daniele},
   title={Sharp estimates on the first eigenvalue of the $p$-Laplacian with
   negative Ricci lower bound},
   journal={Math. Z.},
   volume={277},
   date={2014},
   number={3-4},
   pages={867--891},
   issn={0025-5874},
   review={\MR{3229969}},
   doi={10.1007/s00209-014-1282-x},
}

\bib{raulot-savo}{article}{
   author={Raulot, S.},
   author={Savo, A.},
   title={A Reilly formula and eigenvalue estimates for differential forms},
   journal={J. Geom. Anal.},
   volume={21},
   date={2011},
   number={3},
   pages={620--640},
   issn={1050-6926},
   review={\MR{2810846}},
   doi={10.1007/s12220-010-9161-0},
}

\bib{savo}{article}{
   author={Savo, Alessandro},
   title={On the lowest eigenvalue of the Hodge Laplacian on compact,
   negatively curved domains},
   journal={Ann. Global Anal. Geom.},
   volume={35},
   date={2009},
   number={1},
   pages={39--62},
   issn={0232-704X},
   review={\MR{2480663}},
   doi={10.1007/s10455-008-9121-0},
}

\bib{scott}{article}{
   author={Scott, Chad},
   title={$L^p$ theory of differential forms on manifolds},
   journal={Trans. Amer. Math. Soc.},
   volume={347},
   date={1995},
   number={6},
   pages={2075--2096},
   issn={0002-9947},
   review={\MR{1297538}},
   doi={10.2307/2154923},
}

\bib{seto-wei}{article}{
   author={Seto, Shoo},
   author={Wei, Guofang},
   title={First eigenvalue of the $p$-Laplacian under integral curvature
   condition},
   journal={Nonlinear Anal.},
   volume={163},
   date={2017},
   pages={60--70},
   issn={0362-546X},
   review={\MR{3695968}},
   doi={10.1016/j.na.2017.07.007},
}
\end{biblist}
\end{bibdiv}
\end{document}